\newtheorem{thm}{Theorem}
\newtheorem{defn}{Definition}
\newtheorem{exmp}{Example}
\newtheorem{Problem}{Problem}
\begin{document}

\title{Stability Analysis of Switched Linear Systems with Neural Lyapunov Functions}

\author{Virginie Debauche \and Alec Edwards \and Raphaël M. Jungers \and Alessandro Abate}

\maketitle

\begin{abstract}
Neural-based, data-driven analysis and control of dynamical systems have been recently investigated and have shown great promise, e.g. for safety verification or stability analysis. Indeed, not only do neural networks allow for an entirely model-free, data-driven approach, but also for handling arbitrary complex functions via their power of representation (as opposed to, e.g. algebraic optimization techniques that are restricted to polynomial functions). Whilst classical Lyapunov techniques allow to provide a formal and robust guarantee of stability of a \emph{switched} dynamical system, very little is yet known about correctness guarantees for \emph{Neural} Lyapunov functions, nor about their performance (amount of data needed for a certain accuracy). \\
We thus formally introduce neural Lyapunov functions for the stability analysis of switched linear systems: we benchmark them on this paradigmatic problem, which is notoriously difficult (and in general Turing-undecidable), but which admits recently-developed technologies and theoretical results. Inspired by switched systems theory, we provide theoretical guarantees on the representative power of neural networks, leveraging recent results from the ML community. We additionally experimentally display how neural Lyapunov functions compete with state-of-the-art results and techniques, while admitting a wide range of improvement, both in theory and in practice. This study intends to improve our understanding of the opportunities and current limitations of neural-based data-driven analysis and control of complex dynamical systems.     
\end{abstract}

\section{Introduction}

The interaction between Lyapunov theory and learning has motivated emerging research in recent years. For instance, \cite{berkenkamp2017safe} or \cite{fazlyab2021introduction} employ Lyapunov approaches to certify safety conditions in AI-based systems. Other works (among which the present one), motivated by the great performance of neural networks in many computational technologies, tackle the other direction and use AI techniques in order to learn Lyapunov functions \cite{Chang2020, Abate2021, Dawson2021, Farsi2022, Zhang2023}. They provide promising proofs of concept for automated, data-driven control solutions that are agnostic of the particular properties of the considered control system. Indeed, a prime advantage of neural networks is that they can be deployed in an unknown setting and on complex systems, such as Cyber-Physical Systems, where classical control techniques would require verifying technical properties (such as Lipschitz continuity, linearity, or passivity) that are out of hand in practice.

A main paradigm along these lines is a situation where a neural network is constructed from observed trajectories, and its input-ouput relation is interpreted as a Lyapunov function for the system. Our work falls within this paradigm. However, a crucial limitation of neural techniques is that they rarely come with guarantees; that is, the obtained network is tailored to satisfy the properties of a Lyapunov function \emph{on the observed trajectories}, but very little is known about its generalisation properties out of the sample set. In safety-critical application, one needs generalisation guarantees, where the observed performance of the Lyapunov function is guaranteed to hold for the whole universe, not only the sampled set. Even more, Lyapunov functions must have specific properties, which might be violated by the obtained neural network (think simply of positive definiteness).

In this work, we are primarily interested in understanding this (potential) gap between high performance on the sampled set, and behaviour on the true system. For this purpose, our strategy is to study this approach on a family of complex, but still well-understood systems, for which alternative computational techniques have been developed, so that one can compare the performance of neural networks, both in terms of computational performance and in terms of accuracy. In this paper we focus on switched systems. 
 
Switched systems have attracted massive research efforts in Systems and Control, because they provide a relatively simple framework for representing many complex engineering applications. As a few examples, they have been used in bipedal robotics \cite{Brogliato2004}, in image processing \cite[Chapter 5]{RJ-2009}, or multihop control networks \cite{Alur2011}. Closer to AI, switched systems have been used to model Q-learning algorithms \cite{Lee2020}, and techniques developed for classification have been used in switched system identification \cite{Lauer2008}.

In particular, we focus on the stability analysis problem for switched systems, which is encapsulated by the notion of \emph{joint spectral radius} (in short, JSR). It has been proven that it is extremely hard to compute the JSR \cite{BLONDEL2000}, and in full generality, the problem of computing this quantity is Turing-undecidable \cite{Blondel_boundedness_2000}. However, different computational techniques have been developed, relying essentially on Lyapunov theory, with good practical performance. One of the main families of such techniques rely on the power of Semi-Definite Programming (SDP), and provides quadratic/ellipsoidal \cite{BlonNesThe-2005} or Sum-Of-Squares (SOS) \cite{PARRILO2008} Lyapunov functions. Despite the hardness of the JSR computation, these references provide approximation guarantees; that is, one can obtain an estimate of the JSR, with a bound on the error made by the estimate. See \cite{RJ-2009} for a whole review on this topic. 

As a first theoretical contribution, we provide similar theoretical approximation guarantees for neural network-based Lyapunov functions. For this, we leverage results from convex geometry, and combine them with recent results in Machine Learning (ML), about the representation power of neural networks. Results on the approximation power of neural networks are well established, but have not previously been tailored to Lyapunov functions constructed as neural networks.
 
Our second contribution is empirical. It is well known that ReLU neural networks compute piecewise linear functions. Such functions are another popular technique to compute the JSR, however with less efficient computational power: the computation of a piecewise linear norm cannot be achieved efficiently since it requires to solve a bilinear program \cite{AthaJung-2019}, which cannot be done in polynomial time. ReLU neural networks offer an alternative to compute this type of Lyapunov functions. We thus provide numerical experiments in order to benchmark this alternative way of obtaining piecewise linear Lyapunov functions. We first consider a low-dimensional switched system as proof of concept where we observe that we are competitive in terms of approximation precision. Then we increase the dimension to show that we compete with SDP-based techniques both in terms of computation time and approximation precision.
 
Towards the end of the paper, we provide an overview of technical problems that are encountered, and investigate techniques from ML and from control to alleviate these problems.

\subsection{Notation}
Given an integer $m \in \mathbb{N}$, the notation $\langle m \rangle$ refers to all the integers smaller than or equal to $m$, i.e. $ \left\{ 1, \dots, m \right\}$. Given a real number $x \in \mathbb{R}$, $\lfloor x \rfloor$ denotes the floor of $x$, i.e. the largest integer which does not exceed $x$, while $\lceil x \rceil$ denotes the ceiling of $x$, i.e. the smallest integer exceeding $x$.

\section{Background}
\subsection{Switched systems and joint spectral radius}

In classical linear systems theory, a typical question is whether the \emph{discrete-time linear system} $ x_{k+1} := A \, x_k $ is \emph{stable}, that is, whether all the trajectories asymptotically tend to zero. A necessary and sufficient condition for this is that the \emph{spectral radius} of the square matrix $A \in \mathbb{R}^{n \times n}$ with $n \in \mathbb{N}$ defined by 
\[ \rho(A) ~ := ~ \lim\limits_{k \to \infty} \left\| A^k \right\|^{1/k}\]
\noindent is less than one. The spectral radius expresses the maximal rate of growth of the system, and has been proved to be equal to the maximal modulus of the eigenvalues of the matrix $A$. 

Discrete-time linear systems can be generalised to \emph{switched linear systems} defined by
\begin{equation}\label{Eq:LinSwitchedSystem}
    x_{k+1} ~ = ~ A_{\sigma(k)} \, x_k
\end{equation}
where the dynamics evolves at each time step according to a finite set of square matrices $\Sigma := \left\{ A_1, \dots, A_M \right\} \subset \mathbb{R}^{n \times n}$, $n \in \mathbb{N}$, and the \emph{switching signal} $\sigma : \mathbb{N} \to \langle M \rangle$. Here, the switching signal can be interpreted as an exogenous perturbation (think of an operator who may switch the operating mode of a system, or any other situation where the law of dynamics may switch from time to time e.g. due to external disturbance, or change of specification) and the system is thus defined by \emph{several} matrices rather than a single one, as is the case for classical linear systems. A switched system is \emph{(worst-case) stable} if for all $x_0\in \mathbb R^n,$ for all switching signals $\sigma(.),$ $x_k \rightarrow 0$ when $k\rightarrow \infty.$

For switched systems, one can generalise the notion of \emph{spectral radius}~\cite{Rota1960} to the finite set of square matrices $\Sigma$.

\begin{defn}[Joint spectral radius]
Given a finite set of square matrices $\Sigma := \left\{ A_1, \dots, A_M\right\} \subset \mathbb{R}^{n \times n}$ with $n \in \mathbb{N}$, the \emph{joint spectral radius} (JSR) of $\Sigma$, denoted by $\rho(\Sigma)$, is defined by
\begin{equation}\label{Eq:DefJSR}
     \rho (\Sigma) ~:=~ \limsup_{k \to \infty} \left\{ \left\| A \right\|^{1/k} : A \in \Sigma^k \right\}
\end{equation}
where $\Sigma^k := \left\{ A_{\sigma_k} := A_{\sigma_k(1)} \cdots A_{\sigma_k(k)} \mid  \sigma_k : \langle k \rangle \to \langle M \rangle \right\}$ encodes all the possible products of length $k$ of the matrices in $\Sigma$.
\end{defn} 
\noindent Similar to the linear case above, the JSR actually expresses the maximal asymptotic behaviour of a switched linear system. 
The joint spectral radius of $\Sigma$ can be related to the stability of \eqref{Eq:LinSwitchedSystem} by the following Theorem.
\begin{thm}[Corollary~1.1 in \cite{RJ-2009}]
Given a finite set of $M$ square matrices $\Sigma := \{ A_1, \dots, A_M \}$. The corresponding switched linear system is stable if and only if $\rho(\Sigma) < 1$.
\end{thm}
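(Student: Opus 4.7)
The plan is to prove the two directions separately: the implication $\rho(\Sigma)<1 \Rightarrow$ stability is essentially unpacking the definition, whereas the converse requires more care, since one must convert an asymptotic rate bound on \emph{matrix products} into the non-existence of a bad \emph{trajectory}.

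For the easy direction, I would proceed as follows. Assume $\rho(\Sigma)<1$ and pick any $\rho' \in (\rho(\Sigma),1)$. By the definition in \eqref{Eq:DefJSR}, there exists $k_0$ such that for every $k\geq k_0$ and every product $A \in \Sigma^k$, $\|A\|^{1/k}\leq \rho'$, i.e., $\|A\|\leq (\rho')^k$. For any initial state $x_0$ and any switching signal $\sigma$, the trajectory of \eqref{Eq:LinSwitchedSystem} satisfies $\|x_k\| \leq \|A_{\sigma(k-1)}\cdots A_{\sigma(0)}\|\cdot\|x_0\|\leq (\rho')^k\|x_0\|$ for $k\geq k_0$, and this tends to $0$ uniformly in $\sigma$.

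For the converse I would argue by contrapositive: assume $\rho(\Sigma)\geq 1$ and exhibit a switching signal and initial condition producing a non-vanishing trajectory. The cleanest route relies on Berger--Wang's identity $\rho(\Sigma) = \limsup_{k\to\infty} \max_{A\in\Sigma^k}\rho(A)^{1/k}$. If $\rho(\Sigma)>1$, then for some length $k^\star$ there exists a product $B = A_{i_1}\cdots A_{i_{k^\star}}$ with spectral radius strictly greater than $1$. Choosing the periodic switching signal cycling through $i_1,\dots,i_{k^\star}$ and taking $x_0$ to be an eigenvector of $B$ associated with an eigenvalue of modulus greater than $1$ yields $x_{n k^\star} = B^n x_0$, whose norm diverges. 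The case $\rho(\Sigma)=1$ can be handled by combining this construction with a standard limit argument, or by invoking the equivalent characterization through contractive norms: $\rho(\Sigma)<1$ is equivalent to the existence of a common norm $\|\cdot\|_\star$ and a constant $\gamma<1$ with $\|A_i x\|_\star \leq \gamma \|x\|_\star$ for every $i$ and every $x$, whose existence would in turn be forced by worst-case stability via a compactness/Barabanov construction.

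The main obstacle is precisely the borderline case $\rho(\Sigma)=1$. There, finite products need not exhibit a spectral radius $\geq 1$ (a phenomenon related to the failure of the finiteness conjecture), so the periodic-switching-plus-eigenvector trick does not directly apply. One must instead extract a non-converging trajectory from the fact that arbitrarily long products have norm bounded away from $0$, using either an extremal (Barabanov) norm or a compactness/limit argument on products of increasing length. Since the result is classical and fully developed in \cite{RJ-2009}, a pragmatic choice in the paper is to present the easy direction explicitly and defer this borderline analysis to that reference.
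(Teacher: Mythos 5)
The paper itself does not prove this statement: it is imported verbatim as Corollary~1.1 of the cited monograph on the joint spectral radius, so there is no internal proof to compare against. Judged on its own merits, your sketch is sound where it is explicit. The direction $\rho(\Sigma)<1 \Rightarrow$ stability is complete as written (pick $\rho'\in(\rho(\Sigma),1)$ and use the $\limsup$ definition to bound every sufficiently long product by $(\rho')^k$). For $\rho(\Sigma)>1$, invoking Berger--Wang to obtain a finite product with spectral radius greater than one, then cycling periodically from an associated eigenvector, is correct --- though note you are already leaning on a nontrivial external theorem there, since the elementary inequality only gives that the generalized spectral radius is \emph{at most} the JSR.

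The genuine gap is the one you name yourself: the case $\rho(\Sigma)=1$. Neither the ``standard limit argument'' nor the appeal to a contractive common norm is actually carried out, and the Barabanov-norm route requires an irreducibility hypothesis (or a reduction to irreducible blocks) that you do not address. The cleanest way to close it --- and to dispense with Berger--Wang entirely --- is to prove the converse directly rather than by contrapositive: assume every trajectory converges to zero, then use compactness of the space of switching sequences (product topology) times the unit sphere, covered by the open sets on which the state norm has dropped below $1/2$ at some finite time; a finite subcover plus concatenation yields uniform exponential decay, hence some finite length $m$ with $\max_{A\in\Sigma^m}\|A\|<1$, and therefore $\rho(\Sigma)\leq \max_{A\in\Sigma^m}\|A\|^{1/m}<1$, covering $\rho=1$ and $\rho>1$ at once. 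Since the result is classical, deferring the delicate case to the cited reference is a legitimate choice --- indeed it is exactly what the paper does by citing the result without proof --- but as a standalone argument your write-up leaves the boundary case open.
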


\subsection{Approximation of the JSR}

Unfortunately, the JSR does not share the same nice algebraic properties as the spectral radius and is extremely hard to compute in practice. Indeed, approximating the JSR is NP-hard \cite{BLONDEL1997}, whereas deciding whether the JSR is smaller than one or not is Turing-indecidable \cite{Blondel2008}, and there does not exist any algebraic criterion to decide the stability (non-algebraicity) of switched systems \cite{Kozyakin1990}. Despite these theoretical limitations, the approximation of the JSR has been tackled by many researchers. In particular, Lyapunov methods have been exploited thanks to the following characterization of the JSR.
\begin{thm}[Proposition~1 in \cite{Rota1960}] \label{Thm:CharacJSR}
For any bounded set of matrices $\Sigma$ such that $\rho(\Sigma) \neq 0$, the joint spectral radius of $\Sigma$ can be defined as 
\begin{equation} \label{eq-infnorm} 
\rho(\Sigma) ~:= ~ \inf_{\left\| \cdot \right\|} \, \max_{A \in \Sigma} \, \left\| A \right\|. 
\end{equation}
\end{thm}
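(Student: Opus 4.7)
The plan is to show the equality by proving both inequalities separately, with the nontrivial direction requiring the construction of an approximate extremal norm.

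For the easy direction, $\rho(\Sigma)\le\inf_{\|\cdot\|}\max_{A\in\Sigma}\|A\|$, I would fix an arbitrary submultiplicative norm $\|\cdot\|$, apply submultiplicativity to an arbitrary product $A_{\sigma(1)}\cdots A_{\sigma(k)}\in\Sigma^k$ to get $\|A_{\sigma(1)}\cdots A_{\sigma(k)}\|\le(\max_{i}\|A_i\|)^k$, take the $1/k$-th power, pass to the limsup using the definition in \eqref{Eq:DefJSR}, and then take the infimum over norms.

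For the nontrivial direction, I want to show that for every $\varepsilon>0$ there is a norm $N$ on $\mathbb{R}^n$ such that the induced operator norm satisfies $\max_{i}\|A_i\|_N\le\rho(\Sigma)+\varepsilon$. Fix $r=\rho(\Sigma)+\varepsilon$, rescale $\tilde A_i:=A_i/r$, and note that by \eqref{Eq:DefJSR} and the assumption $\rho(\Sigma)\ne 0$ one has $\rho(\tilde\Sigma)<1$, so picking $\varepsilon'<\varepsilon$ gives $\max_{A\in\tilde\Sigma^k}\|A\|\le\big((\rho(\Sigma)+\varepsilon')/r\big)^k\to 0$ in any fixed background norm $\|\cdot\|$. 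I would then define
\[
 N(x)\;:=\;\sup_{k\ge 0}\;\sup_{B\in\tilde\Sigma^k}\|Bx\|,
\]
where $\tilde\Sigma^0:=\{I\}$. The decay just noted guarantees $N(x)<\infty$ for every $x$. The three norm axioms then follow: positive homogeneity from linearity of the products, subadditivity from the triangle inequality of the background norm, and positive definiteness from $N(x)\ge\|x\|$.

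Finally, the key shift-invariance computation gives the extremality bound: for any $i$,
\[
 N(A_i x)\;=\;r\sup_{k,\,B\in\tilde\Sigma^k}\|B\tilde A_i x\|\;\le\;r\,N(x),
\]
since $B\tilde A_i$ ranges over a subset of all rescaled products. Hence $\|A_i\|_N\le r=\rho(\Sigma)+\varepsilon$, so $\inf_{\|\cdot\|}\max_{A\in\Sigma}\|A\|\le\rho(\Sigma)+\varepsilon$, and letting $\varepsilon\to 0$ closes the chain of inequalities.

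The main obstacle is the nontrivial direction, and specifically verifying that the supremum defining $N$ is finite: this is where the definition of $\rho(\Sigma)$ via \eqref{Eq:DefJSR} and the boundedness of $\Sigma$ are both used, and where the hypothesis $\rho(\Sigma)\ne 0$ matters to ensure that $r>0$ and the rescaling makes sense. Everything else, including the shift inequality $N(A_i x)\le rN(x)$, is then essentially a one-line consequence of how $N$ is built.
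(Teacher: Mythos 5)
The paper does not prove this statement: it is imported verbatim as Proposition~1 of the cited Rota--Strang reference, so there is no in-paper argument to compare against. Your proof is the standard Rota--Strang construction and is correct: the easy direction via submultiplicativity of induced operator norms, and the hard direction via the rescaled sup-norm $N(x)=\sup_{k\ge 0}\sup_{B\in\tilde\Sigma^k}\|Bx\|$ with the shift inequality $N(A_ix)\le rN(x)$. One small imprecision: since \eqref{Eq:DefJSR} is a $\limsup$, the bound $\max_{A\in\tilde\Sigma^k}\|A\|\le\bigl((\rho(\Sigma)+\varepsilon')/r\bigr)^k$ only holds for all sufficiently large $k$, not for every $k$; finiteness of $N$ for the remaining finitely many indices follows from the boundedness of $\Sigma$, which you do invoke, so the gap is cosmetic rather than substantive.
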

\noindent This result in particular tells that any norm $\| \cdot \| : \mathbb{R}^n \to \mathbb{R}_{\geq 0}$ provides an upper-approximation of the JSR. Therefore, one can in principle compute the JSR of a finite set of matrices $\Sigma$ by looking at all norms and their corresponding approximation $\rho(\Sigma, \| \cdot \|):=\max_{A \in \Sigma} \, \left\| A \right\|$, and keep the smallest. 

In practice, one usually looks through a subset of norms for which the computation can be more easily performed. This is for instance the case for the ellipsoidal approximation of the JSR, defined as 
\begin{equation}\label{Eq:EllipsoidalApproxJSR}
\rho_{\mathcal{Q}}(\Sigma) ~:=~ \inf_{P \succ 0} \, \max_{A \in \Sigma} \, \left\| A \right\|_P 
\end{equation}
where $\left\| A \right\|_P$ denotes the matrix norm induced by the ellipsoidal vector norm associated to $P$, i.e. $\| x \|_P := \sqrt{x^\top P x}$. Note that this denomination stems from the shape of the unit ball $\mathcal{B}_{\parallel \cdot \parallel_P}$ of this norm. From a computational point of view, the ellipsoidal approximation $\rho_{\mathcal{Q}}$ can be computed efficiently thanks to SDP, see \cite{BlonNesThe-2005} for details. Moreover, the following theorem provides theoretical guarantees on the accuracy of the technique. 
\begin{thm}[Theorem~14 in \cite{BlonNesThe-2005}] \label{Thm:Quad_bound}
    Let $\rho(\Sigma)$ be the joint spectral radius of a finite set of matrices $\Sigma$ of dimension $n \in \mathbb{N}$. Then 
    \begin{equation} \label{Eq:ApproxGuaranteesQuad}
    \frac{1}{\tau_{\mathcal{Q}}} \rho_{\mathcal{Q}}(\Sigma) ~\leq ~\rho(\Sigma) ~ \leq ~ \rho_{\mathcal{Q}}(\Sigma),
    \end{equation}
    where $\tau_{\mathcal{Q}} := \sqrt{n}$.
\end{thm}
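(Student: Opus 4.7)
The plan is to leverage the norm-based characterization of Theorem 2 (Rota--Strang) together with John's ellipsoid theorem from convex geometry, which quantifies how well an arbitrary symmetric convex body can be approximated by an ellipsoid.

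The upper bound $\rho(\Sigma) \leq \rho_{\mathcal{Q}}(\Sigma)$ is immediate: ellipsoidal norms form a subclass of all norms on $\mathbb{R}^n$, so restricting the infimum in \eqref{eq-infnorm} to this subclass can only increase its value. The real content is the lower bound. For this, I would start by fixing $\varepsilon > 0$ and, using Theorem 2, picking a norm $\|\cdot\|_*$ such that $\max_{A\in\Sigma}\|A\|_* \leq \rho(\Sigma)+\varepsilon$. Let $K := \{x : \|x\|_* \leq 1\}$ denote its unit ball; $K$ is a symmetric convex body in $\mathbb{R}^n$.

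Next, I would invoke John's theorem: there exists a centered ellipsoid $E$ with $E \subseteq K \subseteq \sqrt{n}\, E$. Translating containments into norm comparisons, this yields
\begin{equation*}
\|x\|_E \leq \sqrt{n}\, \|x\|_*, \qquad \|x\|_* \leq \|x\|_E, \qquad \forall x\in\mathbb{R}^n,
\end{equation*}
where $\|\cdot\|_E$ is the ellipsoidal norm with unit ball $E$, associated with some $P \succ 0$. From this sandwich, the induced matrix norms satisfy $\|A\|_E \leq \sqrt{n}\,\|A\|_*$ for every linear map $A$: indeed, for any $x\neq 0$, $\|Ax\|_E/\|x\|_E \leq \sqrt{n}\,\|Ax\|_*/\|x\|_* $. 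Therefore
\begin{equation*}
\rho_{\mathcal{Q}}(\Sigma) \;\leq\; \max_{A\in\Sigma}\|A\|_P \;\leq\; \sqrt{n}\,\max_{A\in\Sigma}\|A\|_* \;\leq\; \sqrt{n}\,(\rho(\Sigma)+\varepsilon).
\end{equation*}
Letting $\varepsilon \to 0$ and dividing by $\sqrt{n}$ gives the desired bound $\rho_{\mathcal{Q}}(\Sigma)/\sqrt{n} \leq \rho(\Sigma)$.

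The main conceptual obstacle is the passage from an approximately extremal norm to a concrete ellipsoidal approximation, which is exactly where John's theorem carries all the weight: without the sharp constant $\sqrt{n}$ between a convex body and its John ellipsoid, one would not obtain the correct dimensional factor $\tau_{\mathcal{Q}}=\sqrt{n}$. A secondary care point is that an \emph{exact} extremal norm need not exist for a general set $\Sigma$ (a Barabanov-type norm is not guaranteed in full generality without irreducibility assumptions), which is why the argument proceeds via an $\varepsilon$-approximating sequence of norms and a limiting argument rather than by plugging in a single optimal norm.
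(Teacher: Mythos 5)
Your proof is correct. The paper does not prove this statement itself (it is imported as Theorem~14 of \cite{BlonNesThe-2005}), but your argument is exactly the standard one behind that result, and it is structurally identical to the proof the paper gives for its own Theorem~\ref{Thm:Poly_bound_JSR}: take an $\varepsilon$-extremal norm from the Rota--Strang characterization \eqref{eq-infnorm}, approximate its unit ball by a body from the restricted class (John's ellipsoid with factor $\sqrt{n}$ here, Barvinok's polytope with factor $\tau$ there), transfer the containments to induced matrix norms, and let $\varepsilon \to 0$. Your closing remarks on the non-existence of an exact extremal norm and on where the $\sqrt{n}$ comes from are both accurate and well placed.
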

\noindent A similar SDP-based approach using SOS polynomials instead of quadratic functions has been developed \cite[Theorem~3.4]{PARRILO2008}. In this case, the SOS approximation of degree $2d$, $d \in \mathbb{N}$ of the JSR denoted by $\rho_{SOS, 2d}(\Sigma)$ satisfies the following inequalities 
\begin{equation} \label{Eq:ApproxGuaranteesSOS}
\frac{1}{\tau_{SOS}(n,d)^{1/2d}} \, \rho_{SOS,2d} (\Sigma) ~ \leq ~ \rho(\Sigma) ~ \leq ~ \rho_{SOS,2d} (\Sigma), 
\end{equation}
\noindent where $\tau_{SOS}(n,d) = \begin{pmatrix} n+d-1 \\ d \end{pmatrix}$. See \cite{RJ-2009} for a comprehensive review on the JSR.

\subsection{ReLU-activated Neural Nets, CPWL functions}

We consider neural networks with ReLU activation functions. 

\begin{defn}[ReLU neural network]
A \emph{Rectified Linear Units (ReLU) feedforward neural network} with $k +1 \in \mathbb{N}$ hidden layers is defined by $k$ affine transformations $T^{(j)} : \mathbb{R}^{n_{j-1}} \to \mathbb{R}^{n_j}$, $ x \mapsto W^{(j)} x + b^{(j)}$ for $j \in \langle k \rangle$, and a linear transformation $T^{(k+1)} : \mathbb{R}^{n_{k}} \to \mathbb{R}^{n_{k+1}}$ , $ x \mapsto W^{(k+1)} x$. The network \emph{represents} the function $V : \mathbb{R}^{n_0} \to \mathbb{R}^{n_{k+1}}$ given by 
\begin{equation} \label{eq-neuralfunction} 
V ~:= ~ T^{(k+1)} \circ \sigma \circ \cdots \circ T^{(2)} \circ \sigma \circ T^{(1)},
\end{equation}
where $\sigma$ is the component-wise rectifier function, i.e. $\sigma(x) = \left( \max \{0, x_1 \}, \dots, \max \{0, x_n\} \right)$. The matrices $W^{(l)}$ and the vectors $b^{(l)}$ are respectively the \emph{weights} and the \emph{biases} of the $l$-th layer while $n_l$ is the \emph{width} of the $l$-th layer. The maximum width of all the hidden layers is called the \emph{width} of the neural network, and the \emph{depth} is $k+1$. 
\end{defn}
\noindent Any function represented by a ReLU neural network is a continuous piecewise affine (CPWA) function whose sublevel sets are polytopes.
\begin{defn}[Continuous piecewise affine/linear function]
    We say that a function $f: \mathbb{R}^n \to \mathbb{R}$ is a \emph{continuous piecewise affine (resp. linear) function} (CPWA/L function) if there exists a finite set of polyhedra whose union is $\mathbb{R}^n$, and furthermore if $f$ is affine (resp. linear) over each polyhedron. The \emph{number of pieces} of $f$ is the minimal number of polyhedra necessary to express f as above.
\end{defn}
\noindent This function is positively homogeneous if and only if all the biases are zero, see \cite[Proposition~2.3]{HeretAl-2021}.

\subsection{Expressivity Power of ReLU Neural Networks}

It is well-known \cite{HORNIK1991} that ReLU neural networks can approximate arbitrarily well any continuous function, even with a single hidden layer. More recently, researchers \cite{Aroraetal-2018, HeretAl-2021, Dereich2021} have provided quantified approximation guarantees in terms of complexity of the represented function, as a function of the depth and width of the network. In the theorem below, the complexity of the function is expressed by the number of its \emph{pieces}.

\begin{thm}[Theorem~4.4 in \cite{HeretAl-2021}]\label{Thm:BoundStructPieces}
    Let $f: \mathbb{R}^n \to \mathbb{R}$ be a CPWL function with $p$ affine pieces. Then $f$ can be represented by a ReLU neural network with depth $\lceil \log_2(n+1) \rceil +1$ and width $\mathcal{O}(p^{2n^2+3n+1})$.
\end{thm}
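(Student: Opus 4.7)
The plan is to represent $f$ explicitly as a lattice (max–min) expression of affine functions, and then implement this expression via a ReLU circuit of the prescribed depth and width. The starting point is a classical representation theorem asserting that every CPWL function on $\mathbb{R}^n$ can be written in the canonical form
\[
f(x) ~=~ \max_{i \in I} \min_{j \in J_i} \ell_{ij}(x),
\]
where each $\ell_{ij}$ is affine. I would first combine this with a combinatorial analysis of the arrangement of hyperplanes induced by the boundaries between the $p$ pieces, in order to bound the total number of affine functions that appear in the representation by a polynomial in $p$ whose exponent is governed by $n$.

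Next, I would translate $\max$ and $\min$ into ReLU gates via the identities $\max(a,b)=a+\sigma(b-a)$ and $\min(a,b)=-\max(-a,-b)$, where $\sigma$ is the ReLU. A max or min of $m$ affine functions is then realised by a balanced binary tree of ReLU gates of depth $\lceil \log_2 m\rceil$ and width $\mathcal{O}(m)$. Naively nesting the outer $\max$ and the inner $\min$ from the canonical form would give a depth depending on $p$, which is too large. To reach the sharper bound $\lceil \log_2(n+1)\rceil + 1$, I would invoke the refinement that for CPWL functions on $\mathbb{R}^n$ the outer max can be taken over only $n+1$ terms (a dimension-specific statement that uses the structure of the domain in an essential way). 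The outer max then contributes depth $\lceil \log_2(n+1)\rceil$; an inner min-tree, together with the absorbed affine pieces, contributes the extra $+1$. The width bound $\mathcal{O}(p^{2n^2+3n+1})$ is then obtained by accumulating the polynomial-in-$p$ counts of affine functions across the levels of the tree.

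The main obstacle is the tight polynomial-in-$p$ bound on the number of affine functions appearing in the canonical max–min representation: any slack in this count directly inflates the exponent $2n^2+3n+1$ in the width. By contrast, the logarithmic depth $\lceil \log_2(n+1)\rceil + 1$ follows from the classical \emph{max-of-$(n+1)$-mins} representation, which is dimension-specific but by now standard. The genuinely delicate combinatorial counting of pieces in $\mathbb{R}^n$ is where the real work of the theorem lies, and it is the place where I would expect to invest most of the proof effort.
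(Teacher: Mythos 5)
First, a point of reference: the paper does not prove this statement at all --- it is quoted verbatim as Theorem~4.4 of the cited work of Hertrich et al., so your sketch has to be measured against that reference's argument rather than anything in the present text. Measured that way, the decisive step of your plan is wrong. You propose to write $f=\max_{i\in I}\min_{j\in J_i}\ell_{ij}$ and to invoke ``the refinement that the outer max can be taken over only $n+1$ terms.'' No such refinement holds. Each outer term $\min_{j\in J_i}\ell_{ij}$ is a concave function lying below $f$, and if $f$ is convex with $p\gg n+1$ affine pieces (say the maximum of many planes supporting a paraboloid), a concave minorant can agree with $f$ on a full-dimensional set in at most one linearity region, so any max-of-mins representation needs at least $p$ outer terms. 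The symmetric fix --- keeping many outer terms but bounding each inner min by $n+1$ affine functions --- also fails (take $f$ concave with many pieces: $n+1$ affine functions cannot stay below $f$ at infinity in every direction while touching it on a bounded piece). The correct dimension-specific ingredient, which is what Hertrich et al. actually use, is the hinging-hyperplanes theorem of Wang and Sun: $f=\sum_i s_i\max\{\ell_{i,1},\dots,\ell_{i,k_i}\}$ with $s_i\in\{\pm 1\}$ and $k_i\le n+1$. The outer operation is a \emph{sum}, not a max, and this is precisely what makes the depth bound possible: the sum is absorbed into the final linear layer at zero depth cost and only multiplies the width, whereas in your architecture the inner min over $m_i$ affine pieces costs $\lceil\log_2 m_i\rceil$ layers and the outer max another $\lceil\log_2(n+1)\rceil$, so your ``$+1$ for the inner min-tree'' accounting is not valid and the resulting depth would grow with $p$.

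The second half of your sketch --- bounding polynomially in $p$ the number of affine functions (and of max-terms) appearing in the representation, via the hyperplane arrangement generated by the boundaries between pieces --- is indeed where the exponent $2n^2+3n+1$ comes from in the cited proof (roughly, a convex--concave decomposition whose parts are affine on the cells of that arrangement, followed by a count of the $(n+1)$-element subsets of affine functions feeding the max gates). You correctly identify this counting as delicate, but you give no argument for it, so even after replacing the max--min form by the signed-sum-of-$(n+1)$-maxes form, the width bound in the statement remains unproved in your proposal. As it stands, the proposal rests on a false key lemma and an incorrect depth count, and defers the quantitative part that carries the theorem's content.
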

\noindent Note that the depth of the network depends exclusively on the input size, and not on the number of pieces.

\section{Problem Statement}

As a paradigm of safety constraint, we now study the ability of neural networks to provide guarantees for the stability of a given switched linear system. In view of Theorem~\ref{Thm:BoundStructPieces} above, we are interested in approximating the JSR of a finite set of matrices using continuous piecewise linear (CPWL) functions, which we call a \emph{CPWL approximation of the JSR.}

\begin{Problem} Can one provide a guarantee on the accuracy of the best neural approximation of the JSR, that is, on the bound obtained from (\ref{eq-infnorm}) when the norms $\|\cdot{} \|$ are restricted to be of the form (\ref{eq-neuralfunction})? What is the relation between such a guarantee and the number of layers, of neurons, of the width of the network? How can one guarantee that the output $V$ of the network is indeed a valid norm?
\end{Problem}


\section{Theoretical Results}

\subsection{CPWL Approximation Guarantees} 

Inspired by the approximation results in~\eqref{Eq:ApproxGuaranteesQuad} and~\eqref{Eq:ApproxGuaranteesSOS} we exploit Theorem~\ref{Thm:CharacJSR}, additionally leveraging the following result from Barvinok about the approximation power of a convex set by a polytope as a function of the dimension and the number of vertices in the approximation polytope. 

\begin{thm}[Theorem~1.1 in \cite{Barvinok-2012}]\label{Thm:Barvinok_poly}
    Let $\tau > 1$ be a real number and $n$ and $k$ be positive integers and such that 
    \begin{equation}\label{Eq:Thm_Barvinok_poly}
         \left( \tau - \sqrt{\tau^2 -1} \right)^k + \left( \tau + \sqrt{\tau^2 -1} \right)^k ~\geq~ 6 ~ D(n,k)^{1/2}, 
    \end{equation}
    where 
    \begin{equation} \label{Eq:New_Bound_vertices_polytope} D(n,k)~:=~ \sum_{m=0}^{\lfloor k/2 \rfloor} \begin{pmatrix}
    n+k-1-2m \\ k-2m
    \end{pmatrix}.\end{equation}
    Then, for any symmetric, compact and convex set $K \subset \mathbb{R}^n$ with non-empty interior and containing the origin, there exists a symmetric polytope $P \subset \mathbb{R}^n$ with at most $8 D(n,k)$ vertices such that 
    \begin{equation}\label{Eq:IncluConvex} P ~\subseteq ~ K ~\subseteq  ~ \tau P. \end{equation}
\end{thm}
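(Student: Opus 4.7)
The plan is to translate the geometric approximation into a polynomial extremal problem on the sphere, where Chebyshev polynomials of degree $k$ govern the achievable distortion $\tau$ while the dimension $D(n,k)$ controls the required number of vertices (equivalently, facets under polar duality). I would begin by passing to the polar: the desired inclusion $P \subseteq K \subseteq \tau P$ is equivalent to $\tau^{-1}Q \subseteq K^\circ \subseteq Q$, where $Q := P^\circ$ is a symmetric polytope whose facet count equals the vertex count of $P$. It therefore suffices to construct a symmetric polytope $Q$ with at most $8D(n,k)$ facets that sandwiches $K^\circ$ between $\tau^{-1}Q$ and $Q$, which amounts to exhibiting directions $u_i$ on the unit sphere and widths $\alpha_i$ such that the slabs $\{x : |\langle u_i, x\rangle| \leq \alpha_i\}$ collectively cut out such a $Q$.

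Next, I would bring in the Chebyshev polynomial $T_k$ of the first kind, whose classical extremal property gives $|T_k(t)|\leq 1$ on $[-1,1]$ while $T_k(\tau) = \tfrac{1}{2}\bigl((\tau+\sqrt{\tau^2-1})^k + (\tau-\sqrt{\tau^2-1})^k\bigr)$ for $\tau \geq 1$. This latter expression is exactly half of the left-hand side of hypothesis (\ref{Eq:Thm_Barvinok_poly}). Since $K$ is centrally symmetric, its Minkowski functional is even, so the natural testing class is polynomials of degree $k$ in the linear forms $\langle u,x\rangle$ whose monomials all share the parity of $k$; the standard dimension count for this space yields $D(n,k) = \sum_{m=0}^{\lfloor k/2\rfloor}\binom{n+k-2m-1}{k-2m}$, i.e.\ the sum of dimensions of homogeneous polynomials of degrees $k, k-2, k-4, \ldots$ in $n$ variables. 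The hypothesis then reads $T_k(\tau) \geq 3\sqrt{D(n,k)}$, asserting that the Chebyshev amplification rate outstrips the dimensional cost.

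The construction of $Q$ would proceed by a Carath\'eodory-style extraction. I would set up a positive linear functional on this $D(n,k)$-dimensional polynomial space — naturally, integration against the uniform surface measure on the sphere reweighted through $K^\circ$ — and represent it as a positive combination of at most $D(n,k)$ point evaluations via Carath\'eodory's theorem in the cone of nonnegative measures. Each chosen point becomes a direction $u_i$, and the width $\alpha_i$ of its associated slab is calibrated using the Chebyshev bound so that the resulting polytope contains $K^\circ$ but lies inside $\tau K^\circ$. Passing back to the primal via polarity produces the polytope $P$; the constant $8$ bookkeeps the symmetrization, the duality, and the elementary overhead of going from a half-sphere analysis to the full sphere.

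The principal obstacle is ensuring that these steps interact at the right quantitative tightness. The Carath\'eodory representation naturally controls an $L^2$ (or averaged) quantity, whereas the polytopal sandwich requires a pointwise sup-norm bound on $\partial K^\circ$; bridging the two is where a Markov-type inequality on the sphere — most plausibly in the Gegenbauer basis attached to the $\mathrm{O}(n)$-invariant measure — enters and accounts for the universal constants $6$ and $8$. Pinning those constants down exactly as in (\ref{Eq:Thm_Barvinok_poly}) is, I expect, the delicate and technically heaviest part of the proof.
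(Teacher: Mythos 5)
This theorem is not proved in the paper at all: it is quoted verbatim from Theorem~1.1 of \cite{Barvinok-2012}, so your attempt has to be measured against Barvinok's own argument. You have correctly identified two of its pillars: hypothesis \eqref{Eq:Thm_Barvinok_poly} is exactly $T_k(\tau)\geq 3\sqrt{D(n,k)}$ for the degree-$k$ Chebyshev polynomial $T_k$, and $D(n,k)$ is the dimension of the space $V$ spanned by monomials of degrees $k,k-2,k-4,\dots$, so that $x\mapsto T_k(\langle u,x\rangle)$ lies in $V$; the reduction via polarity is also harmless. The genuine gap is in your extraction step. An exact Carath\'eodory/Tchakaloff quadrature reproducing $\int f\,d\mu$ for all $f\in V$ (a first-moment, $D$-dimensional constraint) cannot close the argument: for any fixed measure $\mu$ the quantity $\int T_k(\langle u,x\rangle)\,d\mu(x)$ is \emph{not} uniformly large over the directions you need to exclude, because $T_k$ oscillates in $[-1,1]$ and is large only near the maximizer of $|\langle u,\cdot\rangle|$. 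Largeness is recovered only after squaring, through a Christoffel-function bound $f(y)^2\leq D\int f^2\,d\mu$, which requires $\mu$ to be a Kiefer--Wolfowitz optimal-design measure for $V$ on the body (equivalently, a John-type isotropic position of the feature map $\phi(x)=(x^{\otimes k},x^{\otimes k-2},\dots)$), not the reweighted uniform measure you propose. But once squares are involved, the selected points must approximately reproduce \emph{second} moments, $\sum_i\lambda_i\,\phi(x_i)\phi(x_i)^{\top}\approx\int\phi\phi^{\top}d\mu$, and exact Carath\'eodory in that matrix space costs on the order of $D^2$ points, destroying the $8D(n,k)$ vertex budget.

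Barvinok's proof resolves precisely this tension: he sparsifies the isotropic (John) decomposition down to at most $4D$ points with a Batson--Spielman--Srivastava-type theorem, whose spectral distortion $\bigl(\tfrac{\sqrt{4}+1}{\sqrt{4}-1}\bigr)^2=9$ is exactly what produces the constant $3=\sqrt{9}$ (hence the $6$ in \eqref{Eq:Thm_Barvinok_poly}), and whose $4D$ points, doubled into symmetric pairs $\pm x_i$, give the $8D(n,k)$ vertices; the contradiction is then $T_k(\tau)^2\leq f_u(y)^2\leq D\int f_u^2\,d\mu\leq 9D\max_i f_u(x_i)^2\leq 9D$ for the offending direction $u$. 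A Markov- or Bernstein-type inequality on the sphere, which you invoke to bridge the $L^2$ and sup norms, plays no role and could not supply these constants, nor does the factor $8$ come from symmetrization-plus-duality bookkeeping. So your scaffolding (Chebyshev extremality plus the parity-$k$ dimension count) is right, but the decisive mechanism --- second-moment isotropy control combined with a linear-size approximate John decomposition --- is missing, and with it the entire quantitative content of the theorem.
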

\noindent We can now derive approximation guarantees on the CPWL approximation of the JSR.

\begin{thm} \label{Thm:Poly_bound_JSR}
    Let $\rho(\Sigma)$ be the joint spectral radius of a finite set of matrices $\Sigma$ of dimension $n \in \mathbb{N}$. For any $\tau > 1$ for which there exists $k_{\tau} \in \mathbb{N}$ such that relation~\eqref{Eq:Thm_Barvinok_poly} is satisfied, the following relation holds : 
    \begin{equation} \label{Eq:PolytopicApproxGuaranttes}  \frac{1}{\tau}\, \rho_{\mathcal{P}}(\Sigma) ~ \leq \rho(\Sigma) ~ \leq ~ \rho_{\mathcal{P}}(\Sigma),
     \end{equation}
    where $\rho_{\mathcal{P}}$ is the optimal solution of (\ref{eq-infnorm}) where the norms are restricted to CPWL norms with at most $8D(n,k)$ vertices.
\end{thm}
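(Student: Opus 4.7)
The plan is to split the double inequality \eqref{Eq:PolytopicApproxGuaranttes} into its two parts and handle them separately, the upper bound being an immediate consequence of Theorem~\ref{Thm:CharacJSR} and the lower bound being the substantive part in which Barvinok's polytopal approximation enters the picture.

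For the upper bound $\rho(\Sigma) \leq \rho_{\mathcal{P}}(\Sigma)$, I simply observe that restricting the infimum in \eqref{eq-infnorm} to the subfamily of CPWL norms with at most $8D(n,k_\tau)$ vertices can only increase its value, since every such norm remains a valid feasible point of \eqref{eq-infnorm}.

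For the lower bound, the strategy is to start from a norm whose induced operator norms nearly realize the Rota--Strang infimum and then replace it by the Minkowski functional of a polytopal approximation of its unit ball. Concretely, for any $\epsilon>0$ I would invoke Theorem~\ref{Thm:CharacJSR} to select a norm $\|\cdot\|_K$ with $\max_{A\in\Sigma}\|A\|_K \leq \rho(\Sigma)+\epsilon$, and consider its unit ball $K$. The set $K$ is symmetric, compact, convex, with non-empty interior, and contains the origin, so Theorem~\ref{Thm:Barvinok_poly} applies for the given $\tau$ and $k_\tau$: it yields a symmetric polytope $P$ with at most $8D(n,k_\tau)$ vertices satisfying $P \subseteq K \subseteq \tau P$. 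The Minkowski functional of $P$ is then a CPWL norm $\|\cdot\|_P$ whose unit ball is exactly $P$. The two inclusions translate into the pointwise sandwich $\|x\|_K \leq \|x\|_P \leq \tau\|x\|_K$, and using these on numerator and denominator I get, for every matrix $A$,
\begin{equation*}
\|A\|_P \;=\; \sup_{x \ne 0}\frac{\|Ax\|_P}{\|x\|_P} \;\leq\; \tau\,\sup_{x \ne 0}\frac{\|Ax\|_K}{\|x\|_K} \;=\; \tau\|A\|_K.
\end{equation*}
Maximizing over $A\in\Sigma$ and then taking the infimum over the admissible family of CPWL norms yields $\rho_{\mathcal{P}}(\Sigma) \leq \tau(\rho(\Sigma)+\epsilon)$; since the vertex budget $8D(n,k_\tau)$ is independent of $\epsilon$, letting $\epsilon\to 0$ produces the claim $\rho_{\mathcal{P}}(\Sigma)\leq \tau\rho(\Sigma)$.

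The main obstacle I anticipate is the clean passage from the geometric inclusion of convex bodies to the operator-norm estimate: one must check that the Minkowski functional of the Barvinok polytope is indeed a \emph{norm} (needing symmetry of $P$ and the fact that $0$ is interior, both supplied by the theorem) and that the sandwich inclusion survives pre-composition by an arbitrary $A$ in a way that produces the factor $\tau$ on operator norms rather than $\tau^2$. A secondary technical point is the $\epsilon$-approximation required because the Rota--Strang infimum in \eqref{eq-infnorm} is not always attained, but as noted this is harmless since Barvinok's bound is uniform in the chosen norm.
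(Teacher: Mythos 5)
Your proposal is correct and follows essentially the same route as the paper's proof: an $\epsilon$-optimal norm from Theorem~\ref{Thm:CharacJSR}, Barvinok's polytopal approximation of its unit ball via Theorem~\ref{Thm:Barvinok_poly}, the resulting factor-$\tau$ equivalence of gauges yielding a factor $\tau$ (not $\tau^2$) on the induced operator norms, and the trivial upper bound from restricting the infimum to a subfamily. Your version is in fact slightly more careful than the paper's on the direction of the gauge sandwich and on why the Minkowski functional of $P$ is a genuine CPWL norm, but these are cosmetic refinements of the same argument.
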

\begin{proof}
Consider a finite set of $m$ matrices $\Sigma := \{A_1, \dots, A_m \} \subset \mathbb{R}^{n \times n}$ of dimension $n \in \mathbb{N}$, and $\rho(\Sigma)$ its joint spectral radius. By Theorem~\ref{Thm:CharacJSR} and by definition of an infimum, for any value $\varepsilon > 0$, there exists an $\varepsilon$-norm $\left\| \cdot \right\|_{\star}$ such that $\forall i = 1, \dots, m$
\[ \forall x \in \mathbb{R}^n, ~ \left\| A_i x \right\|_{\star} ~\leq~ (\rho(\Sigma) + \varepsilon) \left\| x \right\|_{\star}. \]
This norm defines a convex set $K \subset \mathbb{R}^n$ (which contains the origin) that can be approximated by a polytope. By Theorem~\ref{Thm:Barvinok_poly}, for any positive integer $d$ and any real number $\tau > 1$ satifying~\eqref{Eq:Thm_Barvinok_poly}, there exists a symmetric polytope $P \subset \mathbb{R}^n$ with at most $8 D(n,k)$ vertices such that 
\[ P ~ \subseteq ~ K ~ \subseteq ~ \tau P.\]
Therefore, the homogeneous function $V(x)$ whose 1-level set is $P$, satisfies that for all $x \in \mathbb{R}^n$, 
\[ V(x) ~ \leq ~ \left\| x \right\|_{\star} ~ \leq ~ \tau \, V(x), \] 
and then for any $i = 1, \dots, m$ and any $x \in \mathbb{R}^n$: 
\[ 
\begin{array}{rcl}
V(A_i x) & \leq & \left\| A_i x \right\|_{\star}, \\[0.2cm]
& \leq & (\rho(\Sigma) + \varepsilon) \left\| x \right\|_{\star}, \\[0.2cm]
& \leq & (\rho(\Sigma) + \varepsilon) \, \tau \, V(x).
\end{array} \]
Then, $V(A_i) \leq (\rho(\Sigma) + \varepsilon) \tau$ for all $i = 1 , \dots, m$ and for any $\varepsilon > 0$. Therefore, at the worst case, we have that 
\[ \frac{1}{\tau} \, \rho_{\mathcal{P}}(\Sigma) ~ \leq ~ \rho(\Sigma)~ \leq ~ \rho_{\mathcal{P}}(\Sigma),\] 
where the second inequality is direct since we consider a subset of norms. \end{proof}

\begin{figure}[t!]
    \centering
    \includegraphics[scale=0.33]{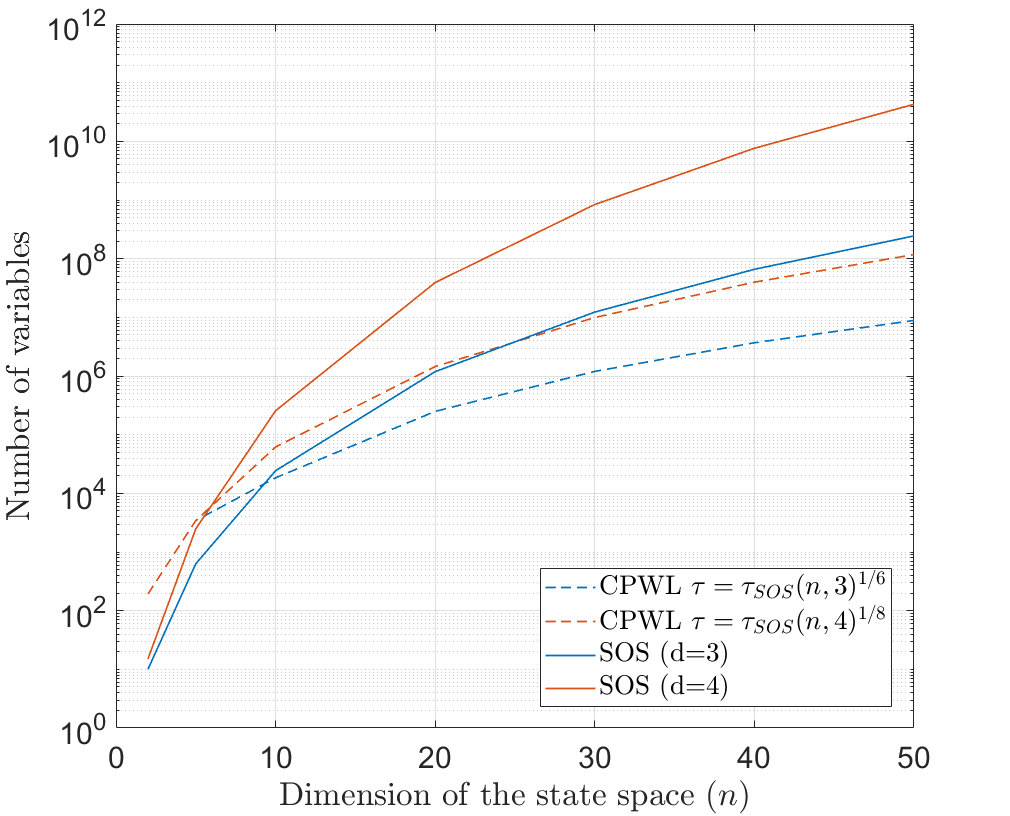}
    \caption{Evolution, as a function of the dimension ($n$), of the number of variables required to achieve precision values $\tau$ on the JSR approximation with the CPWL approach (dashed line) and with the SOS approach (continuous line). In blue, $\tau = \tau_{SOS}(n,3)$, in red, $\tau = \tau_{SOS}(n,4)$. These values are the minimal required accuracy to outperform the  guarantee~\eqref{Eq:ApproxGuaranteesSOS} for, respectively, degree-$3$ and -$4$ SOS. One can see that for large values of $n$, the CPWL approximation needs far fewer variables, for any given precision.} 
    \label{Fig:ComparisonNbVariables_Poly_Quad_SOS}
\end{figure}

\noindent Using this theorem, we can compute an upper-bound on the number of variables required to achieve a given precision $\tau$ as the number of vertices multiplied by the dimension of the state space. In Figure~\ref{Fig:ComparisonNbVariables_Poly_Quad_SOS}, we show the evolution with the dimension of this number of variables, for different values of $\tau$. For the sake of comparing with the performance of the SOS approximations of degree $3$ and $4$, we selected $\tau = \tau_{SOS}(n,d)$ for $d = 3$ and $d=4$. These results show that for small dimensions, the CPWL approach requires slightly more variables than the SDP solution. However, when we consider higher dimensions, the trend is reversed. These observations motivate the CPWL approach rather than the polynomial approach since we need less variables in high dimension. However, computation methods rapidly suffer from the curse of dimensionality: the complexity is polynomial for SDPs while it is exponential for the CPWL norms, which necessitate solving a bilinear program as explained previously. For this reason, we now investigate generating CPWL approximations with neural networks.

\subsection{Bounds on the Structure of the Network} 

Theorem~\ref{Thm:Poly_bound_JSR} provides a relation between the CPWL approximation precision and the number of vertices of its polytopic sublevel sets. Now, combining it with Theorem~\ref{Thm:BoundStructPieces}, we are able to derive bounds on the network structure (i.e. its depth and its width) to be able to achieve this precision. 

To do so, we first need to bound the number of faces of a polytope, i.e. the number of pieces of the corresponding CPWL function, given the number of vertices. 

\begin{thm}[\cite{mcmullen_1970}] \label{Thm:McMullen}
    Let $P$ be a polytope in dimension $n \in \mathbb{N}$ with $k$ vertices. Then, $P$ has at most 
    \[ \begin{pmatrix} k - \lfloor \frac{n+1}{2} \rfloor \\ k-n
        \end{pmatrix} + \begin{pmatrix} k - \lfloor \frac{n+2}{2} \rfloor \\ k-n
        \end{pmatrix} \]
    faces.
\end{thm}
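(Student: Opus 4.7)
The plan is to prove the statement (which is the facet-counting form of McMullen's Upper Bound Theorem) by identifying cyclic polytopes as the extremal family and bounding the $h$-vector of any simplicial polytope. The cyclic polytope $C(k,n)$ is defined as the convex hull of $k$ distinct points on the moment curve $t \mapsto (t, t^2, \ldots, t^n) \in \mathbb{R}^n$. The strategy has two steps: (i) verify that $C(k,n)$ has exactly the number of facets appearing in the theorem, and (ii) show that no $n$-polytope with $k$ vertices has more facets than $C(k,n)$.

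First I would reduce to simplicial polytopes by a generic perturbation argument: any $n$-polytope $P$ with $k$ vertices admits arbitrarily small perturbations of its vertices that render it simplicial without decreasing any face number. For the enumeration of facets of $C(k,n)$, I would use Gale's evenness condition: an $n$-subset $S \subseteq \{1, \ldots, k\}$ indexes a facet of $C(k,n)$ if and only if, between any two elements of $\{1, \ldots, k\} \setminus S$, an even number of elements of $S$ lie. A direct case split on the parity of $n$ and on whether vertices $1$ and $k$ are in $S$ produces exactly the two binomial coefficients in the statement. For example, when $n = 2m$ this count is $\binom{k-m}{m} + \binom{k-m-1}{m-1}$, which equals $\binom{k - \lfloor(n+1)/2\rfloor}{k-n} + \binom{k - \lfloor(n+2)/2\rfloor}{k-n}$.

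The upper bound proceeds through the $h$-vector formalism. For a simplicial $n$-polytope $P$ with $f$-vector $(f_0, \ldots, f_{n-1})$, define the $h$-vector by $\sum_{i=0}^n f_{i-1}(x-1)^{n-i} = \sum_{i=0}^n h_i x^{n-i}$, so that $f_{n-1} = \sum_i h_i$ is a fixed nonnegative combination of the $h_i$. Two classical inputs then control the $h_i$: the Dehn–Sommerville relations $h_i = h_{n-i}$, and the Bruggesser–Mani theorem that $\partial P$ admits a line shelling. Shellability interprets each $h_i$ as a count of facets whose restriction set has size $i$; since restriction sets are $i$-subsets of vertices not appearing in a chosen initial facet, one obtains $h_i \le \binom{k-n+i-1}{i}$. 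Combined with the palindromic symmetry $h_i = h_{n-i}$, this gives $h_i \le h_i(C(k,n))$ for every $i$, and summing over $i$ and converting back to $f_{n-1}$ recovers the claimed facet bound.

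The main obstacle is the shellability step and its matching with the Gale-evenness count. The Bruggesser–Mani construction itself requires choosing a line in general position with respect to the hyperplanes spanned by the facets of $P$, and checking that the induced ordering is genuinely a shelling. Tracking restriction sets carefully enough to derive the bound $h_i \le \binom{k-n+i-1}{i}$, and matching it with the $h$-vector of $C(k,n)$ computed from Gale's condition, is the heart of the argument and admits no elementary shortcut. The perturbation reduction to the simplicial case is also delicate: one must verify that the limit does not strictly decrease any face number, which requires a lower-semicontinuity argument for the $f$-vector under generic perturbations.
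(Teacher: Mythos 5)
First, note that the paper does not prove this statement at all: it is McMullen's Upper Bound Theorem, quoted with a citation to \cite{mcmullen_1970}, so there is no internal proof to compare against. Your plan is the classical proof of that theorem (reduce to simplicial polytopes by perturbation, identify the extremal count via Gale's evenness condition for cyclic polytopes, then bound the $h$-vector using Dehn--Sommerville symmetry and a Bruggesser--Mani line shelling), and your bookkeeping with the binomial identities (e.g.\ the even-dimensional count $\binom{k-m}{m}+\binom{k-m-1}{m-1}$ matching the statement, reading ``faces'' as facets, which is the reading the paper actually uses) is correct.

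However, the step you present as the crux is justified incorrectly. The claim that ``restriction sets are $i$-subsets of vertices not appearing in a chosen initial facet'' is false: in a line shelling of a convex polygon $v_1v_2,\,v_2v_3,\dots$, the last edge has restriction set $\{v_1,v_k\}$, which contains a vertex of the initial facet. Moreover, even if restriction sets did avoid the initial facet, counting $i$-subsets of the remaining $k-n$ vertices would give $\binom{k-n}{i}$, not the required $\binom{k-n+i-1}{i}$ (the latter is a multiset count), so the stated reasoning cannot deliver the bound. The genuine argument here is McMullen's: one proves the identity $(i+1)h_{i+1}+(n-i)h_i=\sum_{v}h_i(\mathrm{lk}\,v)$ together with $h_i(\mathrm{lk}\,v)\le h_i(\partial P)$ (the latter by choosing the shelling line through $v$, so that the star of $v$ is shelled first), which yields $h_{i+1}\le\frac{k-n+i}{i+1}h_i$ and hence $h_i\le\binom{k-n+i-1}{i}$ by induction; alternatively one can invoke Stanley's Cohen--Macaulay/M-vector argument. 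Note also that this bound is only needed (and only compared with the cyclic polytope, which is neighborly) for $i\le\lfloor n/2\rfloor$, with Dehn--Sommerville handling the upper half. With that lemma properly proved your outline goes through, but as written the heart of the argument has a genuine gap; for the purposes of this paper, citing McMullen is the intended route.
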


We are finally able to state the following theorem, which provides original theoretical bounds on the structure of a ReLU neural network to provide a CPWL approximation of the JSR with a given precision $\tau$. To the best of our knowledge, this is the first universal approximation result tailored to neural Lyapunov functions.

\begin{thm}\label{Thm:BoundStructNN}
    Let $\rho(\Sigma)$ be the joint spectral radius of a finite set of matrices $\Sigma$ of dimension $n$. For any real $\tau > 1$ for which there exists $k_{\tau} \in \mathbb{N}$ satisfying relation~\eqref{Eq:Thm_Barvinok_poly}, there exists a CPWL function represented by a ReLU neural network of depth 
    \[ \lceil \log_2(n+1) \rceil +1 \]
    and width 
    \[  \mathcal{O} \left(  \left[ \begin{pmatrix} D_n^{\tau} - \lfloor \frac{n+1}{2} \rfloor \\ D_n^{\tau}-n
        \end{pmatrix} + \begin{pmatrix} D_n^{\tau} - \lfloor \frac{n+2}{2} \rfloor \\ D_n^{\tau}-n
        \end{pmatrix} \right]^{2n^2 +3n +1} \right) \]
    where $ D_n^{\tau} := 8 D(n,k_{\tau})$ as defined in~\eqref{Eq:New_Bound_vertices_polytope}, which approximates $\rho(\Sigma)$ with a precision of $\tau$.
\end{thm}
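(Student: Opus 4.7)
The plan is to chain together the three results in the excerpt: first the polytopic approximation guarantee (Theorem~\ref{Thm:Poly_bound_JSR}), then McMullen's upper bound (Theorem~\ref{Thm:McMullen}) to convert a vertex count into a facet count, and finally the neural representation result (Theorem~\ref{Thm:BoundStructPieces}) to convert a piece count into a network depth/width. Because the statement asks only for existence of a ReLU network of the prescribed size, no optimisation or training reasoning is required; the work is entirely combinatorial/geometric.

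First, I would invoke Theorem~\ref{Thm:Poly_bound_JSR} with the given $\tau > 1$ and the associated $k_\tau$ satisfying~\eqref{Eq:Thm_Barvinok_poly}: this produces a symmetric polytope $P \subset \mathbb{R}^n$ with at most $D_n^\tau := 8D(n,k_\tau)$ vertices whose Minkowski functional (gauge) $V_P$ is a CPWL norm realising the approximation $\rho(\Sigma) \le \rho_{\mathcal{P}}(\Sigma) \le \tau\,\rho(\Sigma)$. Next, I would count the affine pieces of $V_P$. Since $V_P$ is positively homogeneous and its $1$-sublevel set is $P$, it is affine on each of the simplicial cones generated (from the origin) by a facet of $P$; consequently the number of affine pieces of $V_P$ is at most the number of facets of $P$. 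Applying Theorem~\ref{Thm:McMullen} with $k = D_n^\tau$ vertices yields the upper bound
\[
p \;\le\; \binom{D_n^\tau - \lfloor (n+1)/2\rfloor}{D_n^\tau - n} \,+\, \binom{D_n^\tau - \lfloor (n+2)/2\rfloor}{D_n^\tau - n}
\]
on the number of pieces.

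Finally, I would feed this value of $p$ into Theorem~\ref{Thm:BoundStructPieces}: any CPWL function on $\mathbb{R}^n$ with $p$ affine pieces can be represented by a ReLU network of depth $\lceil \log_2(n+1)\rceil + 1$ and width $\mathcal{O}(p^{2n^2+3n+1})$. Substituting the bound on $p$ from the previous step directly produces the width expression in the statement of Theorem~\ref{Thm:BoundStructNN}, while the depth is inherited verbatim. Since $V_P$ certifies the $\tau$-approximation by Theorem~\ref{Thm:Poly_bound_JSR}, the resulting network represents a CPWL norm achieving precision $\tau$, as required.

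The only non-routine step is the identification of the number of affine pieces of the gauge $V_P$ with (at most) the number of facets of $P$, so that McMullen's facet/face bound is actually applicable; everything else is a direct substitution. This identification is essentially the observation that the pieces of a homogeneous CPWL function are in bijection with the maximal cells of the conical decomposition of $\mathbb{R}^n$ induced by the facets of its unit ball, and it is the main technical point I would make explicit in the write-up.
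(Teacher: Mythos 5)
Your proposal is correct and follows exactly the route the paper takes: the paper's proof is literally the one-line statement that the theorem "results from the successive application of Theorems~\ref{Thm:Poly_bound_JSR},~\ref{Thm:McMullen} and~\ref{Thm:BoundStructPieces}", which is the chain you spell out. Your explicit justification that the affine pieces of the gauge of $P$ correspond to the cones over its facets (so that McMullen's bound applies to the piece count) is a detail the paper leaves implicit, and it is handled correctly.
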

\begin{proof}
    This theorem results from the successive application of Theorems~\ref{Thm:Poly_bound_JSR},~\ref{Thm:McMullen} and~\ref{Thm:BoundStructPieces}.
\end{proof}

\begin{figure}[b!]
    \centering
    \begin{subfigure}[t]{0.32\linewidth}
        \centering
        \includegraphics[scale=0.31]{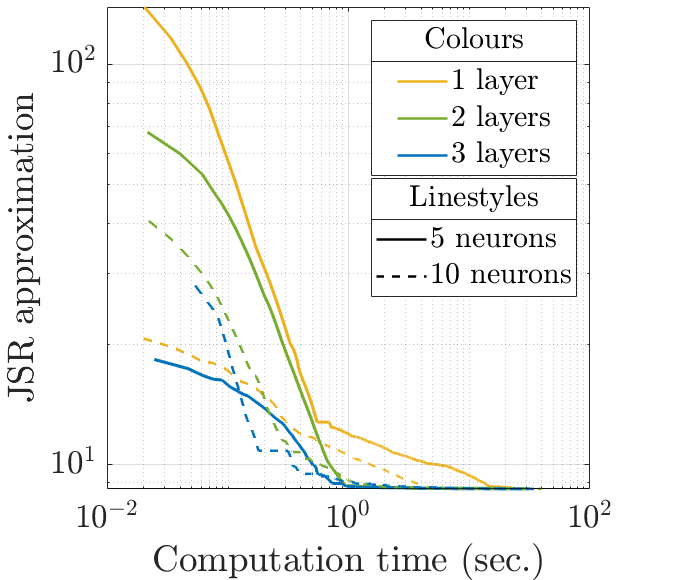}
        \caption{Evolution with the computation time of the loss function of a ReLU neural network with different widths and depths. For each configuration, only the best seed (which provides the lowest approximation) is illustrated.} 
        \label{Fig:Dimension_2_best_seeds}
    \end{subfigure}
    \hfill 
    \begin{subfigure}[t]{0.32\linewidth}
        \centering
        \includegraphics[scale=0.31]{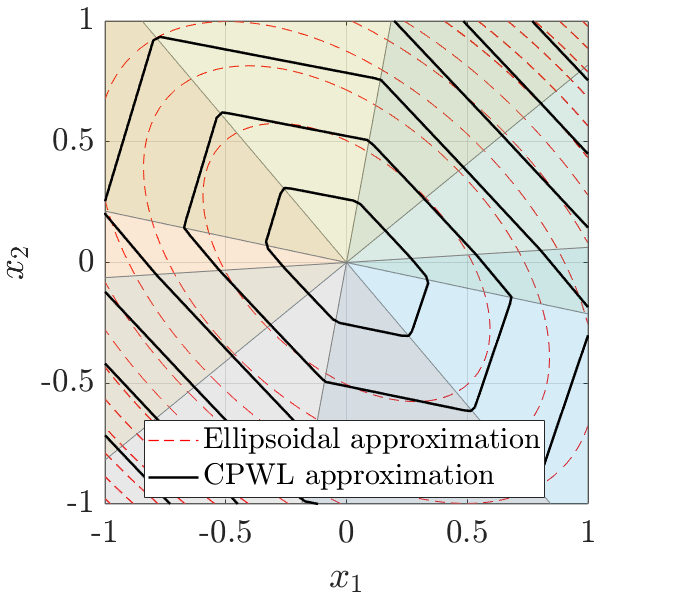}
        \caption{Sublevel sets of the best CPWL approximation with $1$ hidden layer and $5$ neurons, and the partition induced by the network. The sublevel sets of the ellipsoidal approximation have been added for comparison.} 
        \label{Fig:Dimension_2_sublevel_sets_5}
    \end{subfigure}
    \hfill
    \begin{subfigure}[t]{0.32\linewidth}
        \centering
        \includegraphics[scale=0.31]{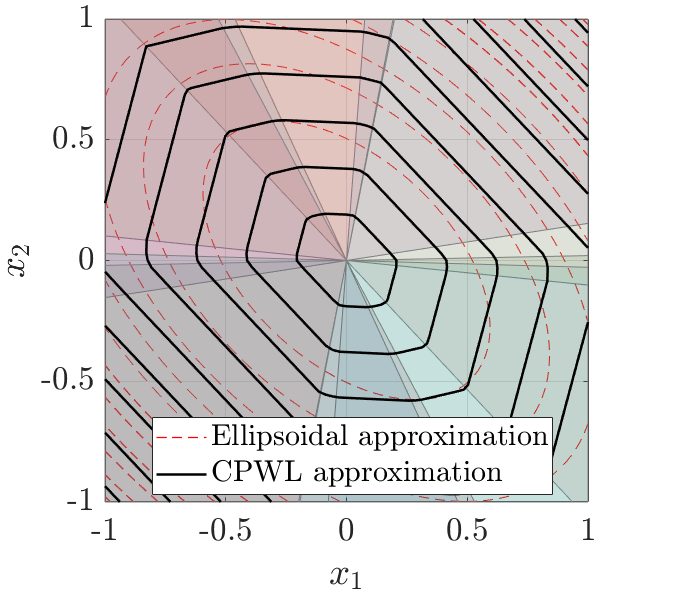}
        \caption{Sublevel sets of the best CPWL approximation with $1$ hidden layer and $10$ neurons, and the partition induced by the network. The sublevel sets of the ellipsoidal approximation have been added for comparison.} 
        \label{Fig:Dimension_2_sublevel_sets_10}
    \end{subfigure}
    \caption{Approximation of the JSR of system~\eqref{Eq:ExmpNumSS} using a ReLU neural network with $500$ sample points, alongside sublevel set of the best approximation from two networks. With 5 neurons~(\ref{Fig:Dimension_2_sublevel_sets_5}) , the CPWL approximation appears geometrically similar to the ellipsoidal approximation. As more neurons are added~(\ref{Fig:Dimension_2_sublevel_sets_10}), the geometry of the sublevel sets diverges from the ellipsoidal approximation, corresponding to an improved approximation of the JSR~(Table \ref{Table:ApproxJSR_SDP_vs_ReLUNN_dim2}). }
    \label{Fig:NumExpDimension2}
\end{figure}

\section{Experimental Evaluation} 

The above theoretical results provide a proof of concept that neural network can approximate the JSR up to an a priori fixed guarantee of accuracy. However, all the bounds above are on the worst case, and we now provide an empirical investigation of the practical efficiency of the neural approach, in comparison with the more classical SDP-based approach.

\subsection{Setup}
Given a switched linear system~\eqref{Eq:LinSwitchedSystem} in dimension $n$, we consider a ReLU neural network without biases. This is without loss of generality, since we may restrict ourself to homogeneous functions $V.$ We train the network with a sample set $\mathcal{S}$ of data points uniformly distributed on the unit sphere $\mathcal{S}^{n-1} := \{ x \in \mathbb{R}^n : \left\| x \right\|_2 = 1\}$ (by linearity and homogeneity, it is sufficient to consider sample points on the sphere). We structurally enforce during training the positivity of the represented function $V(\cdot)$ by imposing nonnegativity for the output weights. We consider the loss function defined by 
\begin{equation}
  \label{eq-neuralnorm}  L(V, \mathcal{S}) ~:=~ \max_{i = 1, \dots, M} \, \max_{x \in \mathcal{S}} \, \frac{V(A_i x)}{V(x)},
\end{equation}
which provides a sample-based approximation of (\ref{eq-infnorm}) where the norms are of the form $V(\cdot),$ as represented by the network. 
Therefore, given a ReLU neural network with $k$ hidden layers and $m$ neurons in each of them, the \emph{neural approximation} of the JSR denoted by $\rho_{NN(k,m)}(\Sigma)$ is defined as the lowest value of the loss function during the entire training campaign. Note that $\rho_{NN(k,m)}(\Sigma)$ depends on the sampled set $\mathcal{S}$ and for this reason, the quantity $\max_{x \in \mathcal{S}} \, \frac{V(A_i x)}{V(x)}$ is not formally a norm and thus nothing guarantees that the network provides a formal upper bound on the JSR (it would be if $\mathcal{S}$ was replaced by $\mathbb R^n,$ in Equation (\ref{eq-neuralnorm}) above). 

\subsection{Numerical Results}

As reference benchmarks, we consider two switched linear systems in dimension $2$ and $8$ respectively, which are challenging as they are known to lead to poor approximation with classical techniques (see \cite[Section~5]{DebDelRosJun-2023} and \cite[Example~5.2]{Ahmadi2014}). The first one serves as proof of concept, while the second example shows that in higher dimensions, neural Lyapunov functions are competitive with SOS-based techniques both in terms of precision and (as suggested earlier in Figure \ref{Fig:ComparisonNbVariables_Poly_Quad_SOS}) in computation time. Note that all experiments were run on an Intel i7 laptop with 4 cores and 8GB of RAM. 

\begin{exmp}[Low dimension] \label{Exmp:Dimension2}
    We consider the switched linear system $\Sigma_2 := \{ A_1, A_2\} \subset \mathbb{R}^{2 \times 2}$ defined by
    \begin{equation}\label{Eq:ExmpNumSS}
    A_1 = \left[ \begin{matrix} 1.5519 &  0.4474 \\
    7.6412  &  7.4716 \end{matrix} \right] ~\text{and}~ A_2 = \left[ \begin{matrix} 0.4750  &  9.1755 \\
    1.8955  &  0.1850 \end{matrix} \right].
\end{equation}

\begin{table}[t!]
    \centering
    \begin{tabular}{|cc|ccc|}
        \Xhline{1.2pt}  
        & &  \multicolumn{3}{c|}{\bf Neural approximation} \\
        & & \multicolumn{3}{c|}{$\rho_{NN(k,m)}(\Sigma_2)$} \\[0.2cm]
        \hline
        $k$ & $m$ & Best & Mean & Std.\\
        \Xhline{1.2pt}
        1 layer & 5 neurons & $8.6977$ & $9.0251$ & $0.8800$ \\
                & 10 neurons & $8.6910$ & $8.6969$ & $0.0056$ \\ 
        \hline
        2 layers & 5 neurons & $8.6983$ & $8.9312$ & $0.4645$\\
                 & 10 neurons & $8.6944$ & $8.7049$ & $0.0077$ \\
        \hline
        3 layers & 5 neurons & $8.6967$ & $9.1984$ & $0.7293$ \\
                 & 10 neurons & $8.6946$ & $8.7130$ & $0.0175$ \\
        \Xhline{1.2pt}
    \end{tabular}
    \caption{Best and mean/std. (over $20$ seeds) approximation of the JSR of system~\eqref{Eq:ExmpNumSS} provided by a ReLU neural networks with different architectures.}
    \label{Table:ApproxJSR_SDP_vs_ReLUNN_dim2}
\end{table}

\noindent The joint spectral radius of $\Sigma_2$ is $8.6881$, the ellipsoidal approximation $\rho_{\mathcal{Q}}(\Sigma_2)$ is $9.5868$ and the SOS approximation of degree $4$, i.e. $\rho_{SOS,4}(\Sigma_2)$, is $8.7203$. We consider a ReLU neural network with different depths and widths that we train with $500$ sample points for $20$ different seeds. The best (i.e. the smallest) approximation, the mean and the standard deviation are summarized in Table~\ref{Table:ApproxJSR_SDP_vs_ReLUNN_dim2}. 

Not surprisingly, the more neurons there are, the better the approximation. We also observe that with more neurons, there is significantly less variability with respect to the seed. In terms of approximation precision, the neural approach is more efficient than both the ellipsoidal and the SOS approaches, since the average approximation with $10$ neurons is smaller than $\rho_{\mathcal{Q}}(\Sigma_2)$ and $\rho_{SOS,4}(\Sigma_2)$. Figures~\ref{Fig:Dimension_2_sublevel_sets_5} and~\ref{Fig:Dimension_2_sublevel_sets_10} illustrate the partition and the sublevel sets of the Lyapunov function encoded by the best 1-layer network with $5$ and $10$ neurons respectively, and one can notice the similarity with the ellipsoidal sublevel sets. However, the computation of the ellipsoidal and the SOS approximation are very fast in such low dimension (they clock $0.2816$ and $1.5156$ seconds respectively), and the novel neural network approach cannot compete with them: for example, Figure~\ref{Fig:Dimension_2_best_seeds} shows that in the best case, we need a few seconds, but in the worse cases, we need almost one minute. 
\end{exmp}

\noindent Let us now consider a $8$-dimensional switched system, such that the computation time of the ellipsoidal and SOS approximation are larger and the network starts to be competitive.

\begin{exmp}[High dimension] \label{Exmp:Dimension8}
    We consider a switched system in dimension $8$ with $8$ different modes, defined by the matrices $\Sigma_{8} := \left\{ A_i \right\} \subset \{0,1\}^{8 \times 8}$ such that for $i = 2, \ldots, 8$,
    \begin{equation}\label{Eq:Benchmark_dim_n}
    A_i(k,l) ~ := ~ \left\{ \begin{array}{rl}
    -1 & \text{if } k = l = i, \\
    1 & \text{if } l = i \text{ and } k \neq i, \\
    0 & \text{otherwise},
    \end{array} \right.
    \end{equation} 
     while the matrix $A_1 = \textbf{1} e_1^\top$. One can prove that the joint spectral radius of this finite set of matrices is $1$, i.e. $\rho(\Sigma_{8}) = 1$. Regarding the approximation of the JSR using classical techniques, the ellipsoidal approximation generates $\rho_{\mathcal{Q}}(\Sigma_{8}) = 2.4286$ but the computation is relatively fast ($\approx 2.5$ seconds), while the SOS approximation is better since $\rho_{SOS, 4}(\Sigma_{8}) = 1.0006$ but it requires much more computation time ($\approx 258$ seconds). 

    For the neural approximation of the JSR, we use a ReLU neural network with a single hidden layer and different numbers of neurons: $10$, $15$ and $30$. We train it using $500$ sample points. Figure~\ref{Fig:Average_graph_dim_8_500_sample_points} shows the evolution of the mean and the min-max area of the JSR approximation with the computation time for each configuration. One can see that in dimension $8$, the neural approach is almost as fast as the SOS ($d=2$) method. However, the network overfits the data and then provides an approximation which is smaller than the true JSR value, except with $10$ neurons where half of the seeds provide an approximation larger than $1$. One way to prevent this behaviour is to consider more sample points. However, the computation time increases as the sample set grows and we are no longer competitive with SDP-based techniques.
\end{exmp}

\begin{figure}[t!]
    \centering
    \vspace{-0.2cm}
    \includegraphics[scale=0.31]{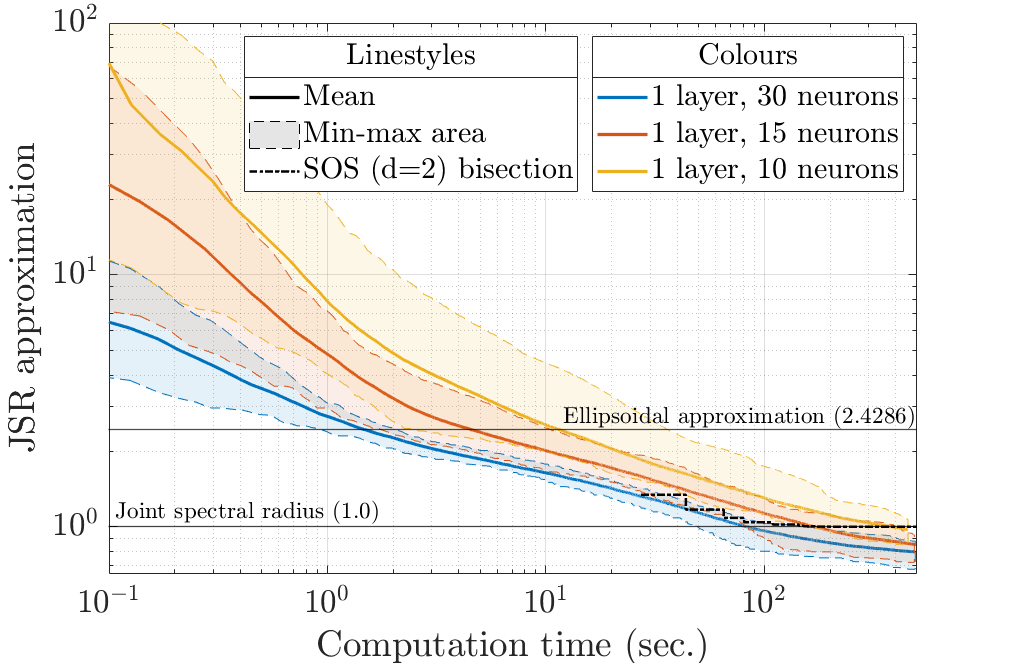}
    \caption{Evolution of the JSR approximation for the 8-dimensional system~\eqref{Eq:Benchmark_dim_n} provided by a ReLU neural network with different number of hidden layers and different numbers of neurons. The value of the JSR, the ellipsoidal approximation and the SOS degree-$4$ (in black) bisection have been added for comparison.}
\label{Fig:Average_graph_dim_8_500_sample_points}
\end{figure}

\subsection{Techniques for Improved Performance}

As we have seen, the numerical experiments suffer from classical issues in ML such as overfitting, large number of sample points needed, etc. In this section, we consider some methods to mitigate these behaviours. 

\subsubsection{Regularization}
In Figure~\ref{Fig:Average_graph_dim_8_500_sample_points}, we have seen that neural networks tend to overfit the data resulting in an incorrect estimate of the JSR. One classical method to prevent overfitting is regularization, i.e. for the problem at hand we penalize the network by adding a new term in the loss function to prevent it from learning overly-complex functions. Figure~\ref{Fig:Regularization} shows promising results for system~\eqref{Eq:Benchmark_dim_n} using L$1$-regularization.

\subsubsection{Incremental Learning}
The number of sample points is a key parameter since it determines a trade-off between computation time and the risk of overfitting. However, it seems that the network does not need many points at the beginning, but progressively requires more and more points during training. Figure~\ref{Fig:IncrementalLearning} illustrates the evolution of the JSR approximation of system~\eqref{Eq:Benchmark_dim_n} provided by a 2-hidden-layer neural network with $15$ neurons, where we now add successively new sample points to the training set. In this example, after combining the incremental learning with the regularization, the resulting network did not reach a wrong over-approximation value.

 \begin{figure}[t!]
    \centering
    \begin{subfigure}[t]{0.32\linewidth}
        \centering
        \includegraphics[scale=0.32]{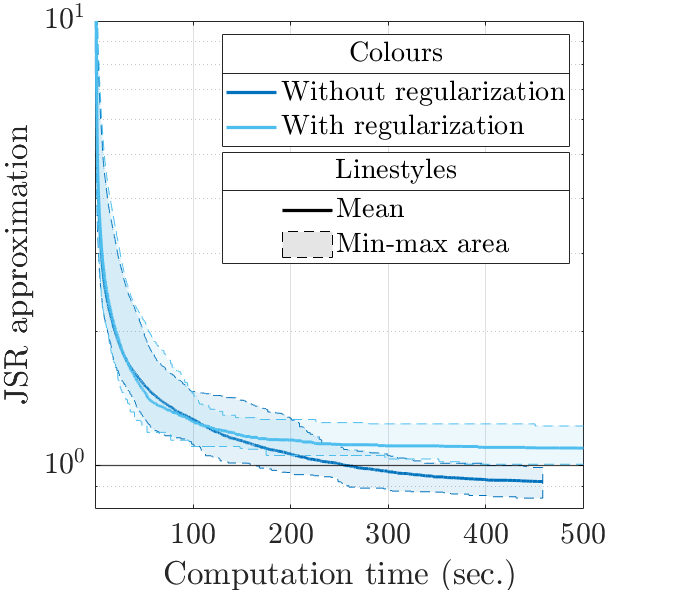}
        \caption{Comparison of the evolution with the computation time of the JSR approximation provided by a ReLU network with $1$ hidden layer and $15$ neurons with and without L$1$-\emph{regularization}.} 
        \label{Fig:Regularization}
    \end{subfigure}
    \hfill 
    \begin{subfigure}[t]{0.32\linewidth}
        \centering
        \includegraphics[scale=0.32]{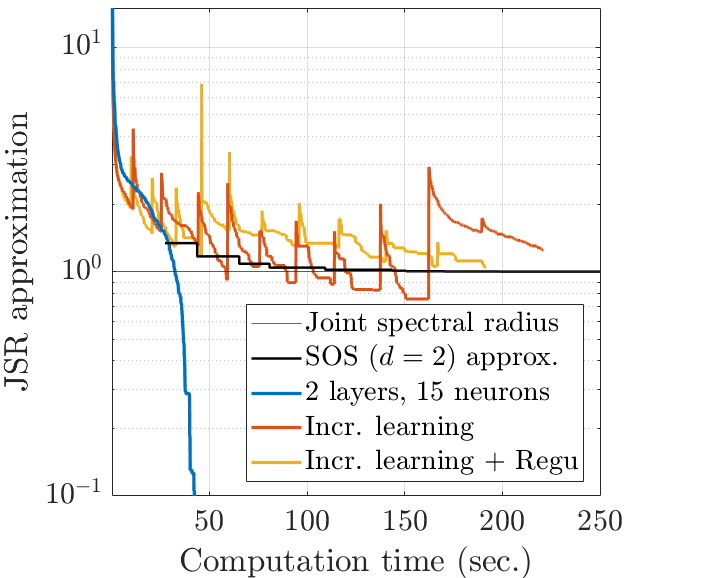}
        \caption{Comparison of the evolution with the computation time of the JSR approximation provided by a ReLU network with $1$ hidden layer and $15$ neurons with and without \emph{incremental learning}.} 
        \label{Fig:IncrementalLearning}
    \end{subfigure}
    \hfill
    \begin{subfigure}[t]{0.32\linewidth}
        \centering
        \includegraphics[scale=0.32]{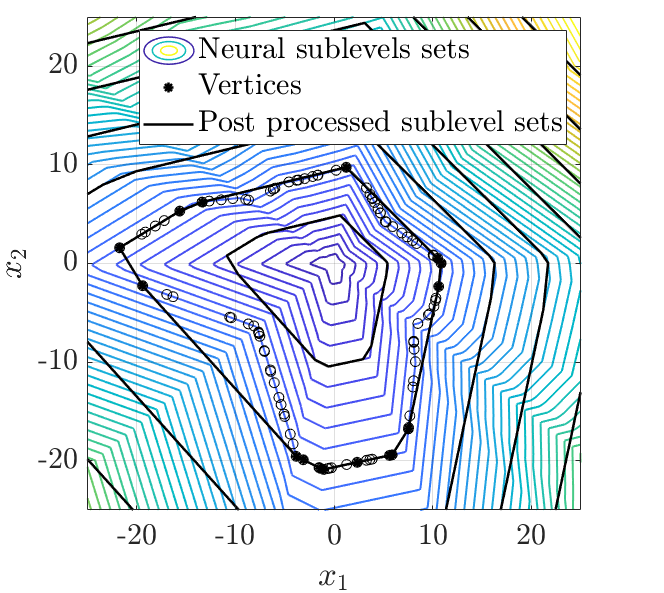}
        \caption{Comparison of the sublevel sets of the output of a $2$-hidden-layer neural network with $10$ neurons ($\rho_{NN(2,10)}(\Sigma_2) = 8.5788$) trained with $100$ sample points and the \emph{post processed} norm $N(\cdot)$ ($\rho_N(\Sigma_2) = 9.4157$).} 
        \label{Fig:PostProcess}
    \end{subfigure}
    \caption{Techniques to avoid overfitting in higher dimensions.}
\end{figure}

\subsubsection{Post Processing}
Until now we have seen with Examples~\ref{Exmp:Dimension2} and~\ref{Exmp:Dimension8} that neural networks can approximate well the JSR, but if badly tuned, they may under-estimate it. In safety-critical applications, we want a firm guarantee of stability: for this, we can leverage the information learnt by the network on the sample set $\mathcal{S}$ into an actual norm $\parallel \cdot \parallel_N$, which provides a valid upper bound, at the price of a more conservative estimate of the JSR. One way to do that is to consider the absolutely homogeneous function whose unit ball is defined by
\[ \overline{\mathcal{B}_{\parallel \cdot \parallel_N}} ~:= ~ conv \left\{ \frac{x_i}{V(x_i)} : x_i \in \mathcal{S} \right\},\]
such that $V$ and the induced norm $\parallel \cdot \parallel_N$ coincide on the sample set. See Figure~\ref{Fig:PostProcess} for an example with system~\eqref{Eq:ExmpNumSS}.

\section{Conclusions}

Motivated by recent developments in neural Lyapunov techniques, we have introduced for it a benchmark application and a theoretical framework, for the particular case of switched systems. These are a family of popular models for complex Cyber-Physical systems, that is algorithmically challenging, but rather well understood from a theoretical standpoint. 

We have shown that one can determine theoretical bounds for the accuracy of neural Lyapunov functions, as a function of the parameters of the network. These guarantees are competitive with classical SDP-based Lyapunov approaches in terms of number of variables. From the empirical point of view, we have shown that in practice as well, the approach is competitive while our neural networks were trained on simple personal computers, which leaves an important room for improvement. We have emphasized the problem of overfitting, and proposed several avenues for mitigating it. 

We see much possible further work, among which confirming the experiments at a more general level (different network architectures, different benchmark examples), potentially improving the learning algorithm (in particular, the loss function), and pushing further the approaches for mitigating overfitting.

\section{Acknowledgments} 
RJ is a FNRS honorary Research Associate. This project has received funding from the European Research Council (ERC) under the European Union's Horizon 2020 research and innovation programme under grant agreement No 864017 - L2C. Alec is supported by the EPSRC Centre for Doctoral Training in Autonomous Intelligent Machines and Systems (EP/S024050/1).

\bibliographystyle{plain}
\bibliography{aaai24}

\end{document}